\newtheorem{thm}{Theorem}[section]
\newtheorem{lem}[thm]{Lemma}
\newtheorem{prob}[thm]{Problem}
\theoremstyle{definition}
\numberwithin{equation}{section}
\begin{document}

%%%%% To ease editing, for IMPAN journals add:

\baselineskip=17pt

%%%%%%%%%%%

%% In the running head, replace first names by initials 
%% and give an abbreviation of the title.

\title[On $C(K,X)$ spaces containing   a complemented copy of $c_{0}(\Gamma)$]{When does $C(K,X)$  contain  a complemented copy of $c_{0}(\Gamma)$   iff  $X$ does?}
      
\author{Vin\'icius Morelli Cortes}
\address{University of S\~ao Paulo, Department of Mathematics, IME, Rua do Mat\~ao 1010,  S\~ao Paulo, Brazil}
\email{vinicius.cortes@usp.br}

\author{El\'oi Medina Galego}
\address{University of S\~ao Paulo, Department of Mathematics, IME, Rua do Mat\~ao 1010,  S\~ao Paulo, Brazil}
%\curraddr{Department of Mathematics and Statistics,
\email{eloi@ime.usp.br}

\subjclass{Primary 46B03, 46E15; Secondary 46E40, 46B25}

\keywords{Complemented subspaces, $c_0(\Gamma)$ spaces, $C(K, X)$ spaces}

\begin{abstract} Let $K$  be  a compact Hausdorff  space with weight w$(K)$, $\tau$  an infinite  cardinal with cofinality cf$(\tau)$ and  $X$  a Banach space.  In contrast with a   classical theorem of Cembranos  and Freniche it is shown that if  cf$(\tau)>$ w$(K)$ then the space $C(K, X)$ contains a complemented copy of $c_{0}(\tau)$ if and only if $X$ does.  

This result is optimal  for every infinite cardinal $\tau$, 
 in the sense that it can not be improved by replacing the inequality     cf$(\tau)>$ w$(K)$ by another weaker than it.

\end{abstract}

\maketitle

      %General info

      %\date{January 1, 2001 and, in revised form, June 22, 2001.}

      %\dedicatory{This paper is dedicated to our advisors.}

\section{Introduction and the main theorem}

Let $K$ be a compact Hausdorff space and $X$ a real Banach space. By $C(K,X)$ we denote the Banach space of all continuous $X$-valued functions defined on $K$ and endowed with the supremum norm. This space will be denoted by $C(K)$ in the case  where $X$ is the scalar field.

For a non-empty set $\Gamma$, $c_0(\Gamma)$ denotes the Banach space of all real-valued maps $f$ on $\Gamma$ with the property that for each $\varepsilon > 0$, the set $\{ \gamma \in \Gamma : |f(\gamma)| \geq \varepsilon\}$ is finite, equipped with the supremum norm. We will refer to $c_0(\Gamma)$ as $c_0(\tau)$ when the cardinality of $\Gamma$ (denoted by $|\Gamma|$) is equal to $\tau$. This space will be denoted by $c_0$ when $\tau = \aleph_{0}$. As usual,  the dual space of $c_0$ will be denoted by $\ell_1$.

We use standard set-theoretical and  Banach space theory terminology as may be found, e.g., in \cite{J} and \cite{JL} respectively.  If $X$ and $Y$ are Banach spaces, we say that $Y$ contains a  complemented copy of $X$ (in short, $X \stackrel{c}{\hookrightarrow} Y$) if $X$ is isomorphic to a complemented  subspace of $Y$. 

In 1982, Saab and Saab \cite{Sa} solved the problem of characterizing when $C(K, X)$ contains a complemented copy of $l_{1}$. More precisely, it was shown that if $K$ is a compact Hausdorff space and $X$ is a Banach space, then
\begin{equation}\label{saabl1}
\ell_1 \stackrel{c}{\hookrightarrow} C(K,X) \iff \ell_1 \stackrel{c}{\hookrightarrow} X.
\end{equation}
On the other hand, in 1984 Cembranos \cite{Ce} and independently Freniche \cite{F} showed that the statement analogous to \eqref{saabl1} for $c_0$ is far from being true. Indeed, they proved that for any infinite compact Hausdorff space $K$ and infinite dimensional Banach space $X$ we have 
\begin{equation} \label{CF} 
c_0 \stackrel{c}{\hookrightarrow} C(K,X). 
\end{equation}
 However,  in opposition to  (\ref{CF}) 
it is well-known, see e.g. \cite[Lemma 2]{O}, that  for all \emph{finite} compact spaces $K$ and  Banach spaces $X$ the following quite similar  result  to \eqref{saabl1} holds:
\begin{equation}\label{c0emC(K,X)}
c_0 \stackrel{c}{\hookrightarrow} C(K,X) \iff c_0 \stackrel{c}{\hookrightarrow} X.
\end{equation}
These facts raise naturally the  following question:
\begin{prob}\label{problema1}
Let $K$ be a compact Hausdorff space, $X$ a Banach space and $\tau$ an infinite cardinal. Under which conditions we have
$$c_0(\tau) \stackrel{c}{\hookrightarrow} C(K,X) \iff c_0(\tau) \stackrel{c}{\hookrightarrow} X?$$
\end{prob}
The purpose of this paper is to give a solution to Problem \ref{problema1} in terms of the cofinality of  $\tau$ and the weight of $K$. Our result includes that mentioned in (\ref{c0emC(K,X)}). Recall that if  $\tau$ is an infinite cardinal then  the \emph{cofinality} of $\tau$, denoted by cf$(\tau)$,  is the least cardinal $\alpha$ such that there exists a family of ordinals $\{\beta_i : i \in \alpha\}$ satisfying $\beta_i < \tau$ for all $i \in \alpha$, and $\sup\{\beta_i : i \in \alpha\} = \tau$.  A cardinal $\tau$ is said to be \emph{regular} when cf$(\tau)=\tau$; otherwise, it is said to be \emph{singular}. Moreover, if  $S$ is a topological space then the \emph{weight} of $S$,  denoted by w$(S)$,  is the least cardinality of an open topological  basis of $S$.

\

Our main result is as follows.

\begin{thm}\label{teoprincipal}
Let $K$ be a compact Hausdorff space, $X$  a Banach space and $\tau$  an infinite cardinal. If $\emph{cf}(\tau) > \emph{w}(K)$ then
$$ c_0(\tau) \stackrel{c}{\hookrightarrow} C(K,X) \iff c_0(\tau) \stackrel{c}{\hookrightarrow} X.$$
\end{thm}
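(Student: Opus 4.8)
The implication ``$\Leftarrow$'' is routine: for any $t_{0}\in K$ the evaluation $f\mapsto f(t_{0})$ together with the isometry $x\mapsto\underline{x}$ of $X$ onto the constant functions exhibits $X$ as a $1$-complemented subspace of $C(K,X)$, so $c_{0}(\tau)\stackrel{c}{\hookrightarrow}X$ forces $c_{0}(\tau)\stackrel{c}{\hookrightarrow}C(K,X)$. For ``$\Rightarrow$'', fix $\Gamma$ with $|\Gamma|=\tau$, an isomorphic embedding $T\colon c_{0}(\Gamma)\to C(K,X)$ onto a complemented subspace, and a projection $P$ of $C(K,X)$ onto $T(c_{0}(\Gamma))$. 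Put $x_{\gamma}=Te_{\gamma}$ and let $\mu_{\gamma}\in C(K,X)^{*}$ be the coordinate functionals of $T^{-1}P$, so that $\mu_{\gamma}(x_{\delta})=\delta_{\gamma\delta}$, $M:=\sup_{\gamma}\|\mu_{\gamma}\|<\infty$, $(\mu_{\gamma}(f))_{\gamma\in\Gamma}\in c_{0}(\Gamma)$ for every $f$, and $a\le\|x_{\gamma}\|\le b$ with $a=\|T^{-1}\|^{-1}$, $b=\|T\|$. If $\mathrm w(K)$ is finite then $K$ is finite and the assertion is \cite[Lemma~2]{O}; so assume $\mathrm w(K)\ge\aleph_{0}$, whence $\mathrm{cf}(\tau)>\aleph_{0}$ and $\tau$ is uncountable. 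Fix an open basis $\mathcal B$ of $K$ with $|\mathcal B|=\mathrm w(K)$ and, for a prescribed $\varepsilon>0$, use the continuity of the $x_{\gamma}$ to choose for each $\gamma$ a $V_{\gamma}\in\mathcal B$ on which $\|x_{\gamma}(\cdot)\|>a/2$ while $x_{\gamma}$ has oscillation $<\varepsilon$. A union of fewer than $\mathrm{cf}(\tau)$ sets of cardinality $<\tau$ has cardinality $<\tau$ and $|\mathcal B|<\mathrm{cf}(\tau)$, so some $V\in\mathcal B$ satisfies $|\{\gamma:V_{\gamma}=V\}|=\tau$; discarding the rest, we may assume $\|x_{\gamma}(s)\|>a/2$ and $\|x_{\gamma}(s)-x_{\gamma}(s')\|<\varepsilon$ for all $\gamma\in\Gamma$, $s,s'\in V$.

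The plan is to realise the copy inside $X$ by evaluating at a point $t_{0}\in V$ and to build the projection from the $\mu_{\gamma}$. Set $y_{\gamma}=x_{\gamma}(t_{0})$; then $\|y_{\gamma}\|>a/2$, and from $\|\sum_{\gamma\in F}a_{\gamma}x_{\gamma}\|\le b\max_{\gamma}|a_{\gamma}|$ one gets $\|\sum_{\gamma\in F}a_{\gamma}y_{\gamma}\|\le b\max_{\gamma}|a_{\gamma}|$. Define $x_{\gamma}^{*}\in X^{*}$ by $x_{\gamma}^{*}(x)=\mu_{\gamma}(\underline{x})$; then $\sup_{\gamma}\|x_{\gamma}^{*}\|\le M$, $(x_{\gamma}^{*}(x))_{\gamma}\in c_{0}(\Gamma)$ for every $x$, and the ``interaction matrix'' $r_{\gamma\delta}:=x_{\gamma}^{*}(y_{\delta})-\delta_{\gamma\delta}=\mu_{\gamma}(\underline{y_{\delta}}-x_{\delta})$ has, since $\sum_{\delta}(\underline{y_{\delta}}-x_{\delta})$ is weakly unconditionally Cauchy, every row in $\ell_{1}(\Gamma)$ with $\sum_{\delta}|r_{\gamma\delta}|\le 2bM$; hence for a prescribed $\eta>0$ there is a finite $F_{\gamma}\subseteq\Gamma$ with $\sum_{\delta\notin F_{\gamma}}|r_{\gamma\delta}|<\eta$. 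Since each column $(r_{\gamma\delta})_{\gamma}$ lies in $c_{0}(\Gamma)$, the set $\{\gamma:\delta\in F_{\gamma}\}$ is countable for each $\delta$, so a transfinite recursion of length $\tau$ — at stage $\alpha$ avoiding $\bigcup_{\beta<\alpha}F_{\gamma_{\beta}}\cup\bigcup_{\beta<\alpha}\{\gamma:\gamma_{\beta}\in F_{\gamma}\}$, a union of fewer than $\tau$ countable sets, hence of cardinality $<\tau$ as $\tau$ is uncountable — produces $\Gamma_{0}\subseteq\Gamma$, $|\Gamma_{0}|=\tau$, with $\delta\notin F_{\gamma}$ for all distinct $\gamma,\delta\in\Gamma_{0}$, so that the off-diagonal rows of $(r_{\gamma\delta})$ on $\Gamma_{0}$ have $\ell_{1}$-norm $<\eta$.

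The remaining point — and what I expect to be the technical heart — is the diagonal: $x_{\gamma}^{*}(y_{\gamma})=1-\mu_{\gamma}(\underline{y_{\gamma}}-x_{\gamma})$ need not be near $1$, since the representing measure of $\mu_{\gamma}$ may concentrate off $V$, where $\underline{y_{\gamma}}-x_{\gamma}$ is large. I would handle this by refining the first step via a Rosenthal-type disjointification: using $(\mu_{\gamma}(f))_{\gamma}\in c_{0}(\Gamma)$ for all $f$ together with $\mathrm{cf}(\tau)>\mathrm w(K)$, pass to a subfamily of size $\tau$ and select $V$ (and $t_{0}$) so that each $\mu_{\gamma}$ of the subfamily has all but an $\varepsilon'$ of its norm carried inside $V$ — in effect a transfinite version of Rosenthal's lemma, the cofinality hypothesis being exactly what lets the recursion over the $\mathrm w(K)$-many ``regions'' of $K$ close. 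Granting this, $|\mu_{\gamma}(\underline{y_{\gamma}}-x_{\gamma})|\le M\varepsilon+2b\varepsilon'$, so for $\varepsilon,\varepsilon',\eta$ chosen small the matrix $(x_{\gamma}^{*}(y_{\delta}))_{\gamma,\delta\in\Gamma_{0}}$ is a norm-small perturbation of the identity on $c_{0}(\Gamma_{0})$, hence invertible; composing $e_{\gamma}\mapsto y_{\gamma}$ with its inverse and with $x\mapsto(x_{\gamma}^{*}(x))_{\gamma\in\Gamma_{0}}$ then realises $c_{0}(\tau)\cong c_{0}(\Gamma_{0})$ as a complemented subspace of $X$. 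This disjointification of the (possibly non-atomic) $\mu_{\gamma}$ is where essentially all the difficulty lies, and it is exactly here that $\mathrm{cf}(\tau)>\mathrm w(K)$ is indispensable: once it is weakened, the $\mu_{\gamma}$ may be scattered over $\mathrm w(K)$-many regions of $K$ in Cembranos–Freniche fashion, which is the mechanism behind the counterexamples underlying the optimality claim in the statement.
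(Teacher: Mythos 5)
Your reduction to a fixed basic open set $V$ (pigeonholing over a basis of size $\mathrm{w}(K)<\mathrm{cf}(\tau)$), the definition $x_\gamma^*(x)=\mu_\gamma(\underline{x})$, and the perturbation framework (near-identity matrix $\Rightarrow$ complemented copy) are all sound, and the easy direction is fine. But the step you yourself flag with ``granting this'' is a genuine gap, and it is the heart of the matter: you need a single $V$ on which every $x_\gamma$ of a $\tau$-sized subfamily has small oscillation \emph{and} which carries all but $\varepsilon'$ of the variation of every $\mu_\gamma$ of that subfamily. No ``transfinite Rosenthal lemma'' delivers this: Rosenthal's lemma makes each measure small on the union of the \emph{other} members of a disjoint family of sets; it never concentrates a measure on a prescribed small set. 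And concentration is simply false in general: the representing measures may be non-atomic and spread over all of $K$ (e.g.\ functionals of the form $f\mapsto\int_K x^*(f(t))\,d\lambda(t)$), while sets of small oscillation for genuinely oscillating $x_\gamma$ are small, so the two requirements on $V$ are incompatible. Consequently the diagonal $x_\gamma^*(y_\gamma)=1-\mu_\gamma(\underline{y_\gamma}-x_\gamma)$ cannot be forced near $1$, and with the unweighted functionals $\mu_\gamma(\underline{\,\cdot\,})$ it cannot even be bounded away from $0$.

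The paper avoids this entirely by never asking for a near-identity matrix. Its Theorem \ref{schlumprechtgeneralizado} shows that a family $(x_i)_{i\in\tau}$ equivalent to the $c_0(\tau)$-basis together with a weak$^*$-null family $(x_i^*)$ satisfying only $\inf_i|x_i^*(x_i)|>0$ already yields a complemented copy (the Rosenthal subfamily theorem and Granero's theorem do the work there). Then, instead of constant functions, it approximates each $f_i$ by a finite sum $\sum_j f^i_j(\cdot)f_i(d^i_j)$ with $f^i_j$ in a dense $\mathcal{C}\subset C(K)$ and $d^i_j$ in a dense $D\subset K$ (Lemma \ref{lemma1}), and pigeonholes over the $\mathrm{w}(K)$-many data ($m_i$, the index $j_0$, the function $g_0\in\mathcal{C}$, the point $d_0\in D$) using $\mathrm{cf}(\tau)>\mathrm{w}(K)$, to get $|(\int_K g_0\,d\mu_i)(f_i(d_0))|>1/(2n_0)$ for $\tau$-many $i$. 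The weighted functionals $\varphi_i=\int_K g_0\,d\mu_i$, i.e.\ $x\mapsto\psi_i(g_0(\cdot)x)$, are automatically weak$^*$-null, and this uniform lower bound is all that is needed. The missing ingredient in your argument is precisely this weighting by a fixed scalar function $g_0$, which replaces the unobtainable concentration of the measures on a small set; you would need to restructure the proof along these lines (or prove the Schlumprecht-type criterion for $c_0(\tau)$ yourself) to close the gap.
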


 In order to prove Theorem \ref{teoprincipal},  we first solve  in section 2 the problem of characterizing when a Banach space $X$  contains a complemented copy of $c_{0}(\tau)$ for arbitrary infinite cardinal $\tau$ (Theorem \ref{schlumprechtgeneralizado}). { In section 3, we present the proof of  Theorem  \ref{teoprincipal} and in section 4 we show that the hypothesis cf$(\tau) > \text{w}(K)$ is sharp in Theorem \ref{teoprincipal} for every infinite cardinal $\tau$.

\section{Banach spaces $X$ containing a complemented copies of $c_0(\tau)$}\label{sec2}

We start by recalling that a family $(x^*_i)_{i \in \tau}$ in the dual space $X^*$ is said to be \emph{weak$^*$-null} if for each $x \in X$ we have
$$(x^*_i(x))_{i \in \tau} \in c_0(\tau).$$
 We will denote by $(e_i)_{i \in \tau}$ the unit-vector basis of $c_0(\tau)$, that is, $e_i(i)= 1$ and $e_i(j) = 0$ for each $i, j \in \tau$, $i \neq j$. If $\Gamma$ is a subset of $\tau$, we identify $c_0(\Gamma)$ with the closed subspace of $c_0(\tau)$ consisting of the maps $g$ on $\tau$ such that $g(\gamma) = 0$ for each $\gamma \in \tau \setminus \Gamma$.

In his PhD thesis, Schlumprecht \cite{S} obtained the following useful characterization of Banach spaces containing complemented copies of $c_0$.

\begin{thm}(\cite[Theorem 1.1.2]{C}) \label{schlumprecht}
Let $X$ be a Banach space. Then $X$ contains a complemented copy of $c_0$ if, and only if, there exist a basic sequence $(x_n)_{n \in \mathbb{N}}$ in $X$ that is equivalent to the unit-vector basis of $c_0$ and a weak$^*$-null sequence $(x^*_n)_{n \in \mathbb{N}}$ such that
$$\inf_{n \in \mathbb{N}} |x^*_n(x_n)| > 0.$$
\end{thm}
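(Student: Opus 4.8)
The plan is to prove the two implications separately, the forward one being a soft functorial argument and the reverse one — the construction of a projection — being the substance. For necessity, if $X$ has a complemented copy of $c_0$, fix an isomorphism $T$ of $c_0$ onto a subspace $Y\subseteq X$ together with a bounded projection $Q\colon X\to Y$, and put $x_n:=Te_n$ (a basic sequence equivalent to the unit-vector basis of $c_0$) and $x_n^*:=e_n^*\circ T^{-1}\circ Q\in X^*$, where $(e_n^*)\subseteq\ell_1=c_0^*$ are the coordinate functionals. Then $x_n^*(x_m)=\delta_{nm}$, so $\inf_n|x_n^*(x_n)|=1>0$, and $(x_n^*)$ is weak$^*$-null because $T^{-1}Qx\in c_0$ forces $x_n^*(x)=e_n^*(T^{-1}Qx)\to 0$ for every $x\in X$.

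For sufficiency, suppose $(x_n)$ is equivalent to the unit-vector basis of $c_0$ and $(x_n^*)$ is weak$^*$-null with $\delta:=\inf_n|x_n^*(x_n)|>0$. I would first record that $M:=\sup_n\|x_n^*\|<\infty$ by the uniform boundedness principle, that $C:=\sup_n\|x_n\|<\infty$ trivially, and — the point that makes the argument work — that $(x_n)$ is weakly null in $X$, since the canonical isomorphism of $c_0$ onto $\overline{\operatorname{span}}(x_n)$ is weak-to-weak continuous and carries the weakly null basis $(e_n)$ onto $(x_n)$. The main step is then a gliding-hump selection of a subsequence: recursively choose $n_1<n_2<\cdots$ so that, writing $y_k:=x_{n_k}$ and $y_k^*:=x_{n_k}^*$, one has $\sum_{j\neq k}|y_k^*(y_j)|<\delta/2$ for every $k$. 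At stage $k$, with $n_1,\dots,n_{k-1}$ already chosen, this amounts to making both $\sum_{j<k}|x_{n_k}^*(x_{n_j})|$ and $\sum_{j<k}|x_{n_j}^*(x_{n_k})|$ smaller than $\delta 2^{-k-1}$: the first is possible because $(x_m^*)$ is weak$^*$-null and the finitely many $x_{n_j}$ are already fixed, the second because $(x_n)$ is weakly null and the finitely many $x_{n_j}^*$ are already fixed.

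With the subsequence in hand, I would introduce the bounded operators $S\colon X\to c_0$, $Sx:=(y_k^*(x))_k$ (well defined with $\|S\|\le M$ since $(y_k^*)$ is weak$^*$-null), and $U\colon c_0\to Z:=\overline{\operatorname{span}}(y_k)$, $Ue_k:=y_k$, an isomorphism onto $Z$ (a subsequence of a basis equivalent to the $c_0$-basis is again equivalent to it). The composition $SU\colon c_0\to c_0$ is given by the matrix $\bigl(y_j^*(y_k)\bigr)_{j,k}=D+E$, where $D$ is the diagonal part. Since $\delta\le|y_k^*(y_k)|\le MC$, $D$ is an isomorphism of $c_0$ with $\|D^{-1}\|\le 1/\delta$, while the off-diagonal part $E$ has $c_0$-operator norm $\sup_j\sum_{k\neq j}|y_j^*(y_k)|\le\delta/2$ by construction; hence $\|D^{-1}E\|\le 1/2$, and $SU=D(I+D^{-1}E)$ is invertible on $c_0$ since $I+D^{-1}E$ is invertible by a Neumann series. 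Then $P:=U\,(SU)^{-1}\,S$ is a bounded operator on $X$ with $P(X)=Z$ and $P|_Z=\operatorname{id}_Z$ — indeed, if $z=U\eta$ then $Pz=U(SU)^{-1}(SU)\eta=z$ — so $P^2=P$ and $P$ projects $X$ onto $Z\cong c_0$.

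I expect the genuine difficulty to lie in the recursive selection of the subsequence, that is, in arranging that the infinite matrix $\bigl(y_j^*(y_k)\bigr)$ be row-diagonally dominant: each off-diagonal row sum splits into a part indexed by $j<k$, killed using weak$^*$ nullity of $(x_n^*)$, and a part indexed by $j>k$, killed using weak nullity of $(x_n)$, and noticing that this last property is automatic — it follows from equivalence to the $c_0$-basis — is what keeps the whole scheme from stalling. Everything downstream of the selection is the routine Neumann-series bookkeeping indicated above.
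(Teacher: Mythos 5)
Your proof is correct. Note, however, that the paper itself offers no proof of this statement (it is quoted from Cembranos--Mendoza), and the route it takes for its own generalization to $c_0(\tau)$ (Theorem \ref{schlumprechtgeneralizado}) is genuinely different from yours: there one forms the operator $Tx=(x_i^*(x))_{i\in\tau}$ into $c_0(\tau)$ and the isomorphism $S$ with $Se_i=x_i$, and instead of any gliding-hump selection one invokes Rosenthal's extraction theorem (Theorem \ref{rosenthal}) to find $\tau'\subset\tau$ with $|\tau'|=\tau$ on which $T\circ S$ is an isomorphism onto its image $Z$, and then Granero's theorem (Theorem \ref{granero}) to complement $Z\cong c_0(\tau')$ inside $c_0(\tau)$; the projection onto $\overline{\text{span}}\{x_i: i\in\tau'\}$ is then $(T|_{Y'})^{-1}\circ P\circ T$, structurally parallel to your $U(SU)^{-1}S$. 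Your argument replaces both cited theorems by an explicit construction: weak$^*$-nullity of $(x_n^*)$ together with the (correctly observed, and crucial) automatic weak nullity of $(x_n)$ lets you pass to a subsequence whose matrix $(y_j^*(y_k))$ is row-diagonally dominant, and then $SU=D(I+D^{-1}E)$ is invertible on $c_0$ by a Neumann series, so the projection comes out directly without needing any complementation result inside $c_0$ (your $SU$ is globally invertible, whereas the paper's $T\circ S|_{c_0(\tau')}$ is only an isomorphism onto its image, forcing the appeal to Granero). What your approach buys is elementarity and self-containedness in the countable case; what it does not give is the cardinal-free generalization the paper needs, since the recursive selection controls only finitely many previously chosen vectors and functionals at each stage and does not survive transfinite induction at limit stages for uncountable $\tau$ -- which is precisely why the paper routes the general case through Rosenthal's theorem instead.
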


Our goal in this section is to extend this result to the  $c_{0}(\tau)$ spaces, where $\tau \geq \aleph_{0}$. We next state two auxiliary results.
\begin{thm} (\cite[Remark following Theorem 3.4]{R}) \label{rosenthal}
Let $X$ be a Banach space and $\tau$ an infinite cardinal. Let $T : c_0(\tau) \to X$ be a bounded linear operator  such that $\inf\{ \|T(e_i)\| : i \in \tau\} > 0$. Then there exists a subset $\tau' \subset \tau$ such that $|\tau'|=\tau$ and $T_{|c_0(\tau')}$ is an isomorphism onto its image.
\end{thm}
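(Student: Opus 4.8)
The plan is to manufacture, for each index $i\in\tau$, a functional that reads off the $i$-th coordinate of $T$, to pass to a large subset $\tau'\subseteq\tau$ on which these functionals are almost disjointly supported, and then to use them to bound $T$ from below on $c_0(\tau')$. First I would normalize so that $\|T\|=1$ and set $\delta:=\inf_{i\in\tau}\|Te_i\|>0$. For each $i$, use Hahn--Banach to pick $x_i^*\in B_{X^*}$ with $x_i^*(Te_i)=\|Te_i\|\ge\delta$, and put $z_i^*:=T^*x_i^*$, regarded as an element of $c_0(\tau)^*=\ell_1(\tau)$. Then $z_i^*(j)=x_i^*(Te_j)$ for all $i,j$, $\|z_i^*\|_1\le 1$, and $z_i^*(i)\ge\delta$; in particular each $z_i^*$ has countable support. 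This reduces the theorem to the following selection statement: \emph{there is $\tau'\subseteq\tau$ with $|\tau'|=\tau$ such that $\sum_{j\in\tau'\setminus\{i\}}|z_i^*(j)|\le\delta/2$ for every $i\in\tau'$.}

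The disjointification step is the heart of the matter, and I expect it to be the main obstacle. When $\tau=\aleph_0$ it is immediate from Rosenthal's Lemma, applied to the bounded sequence of measures $(|z_k^*|)_{k\in\tau}$ on $\mathcal P(\tau)$ and the pairwise disjoint singletons $E_m=\{m\}$, $m\in\tau$: one gets an infinite $M\subseteq\tau$ with $\sum_{m\in M,\ m\ne k}|z_k^*(m)|=|z_k^*|\bigl(\bigcup_{m\in M,\ m\ne k}E_m\bigr)\le\delta/2$ for every $k\in M$, and takes $\tau'=M$. For uncountable $\tau$ I would run the analogous selection transfinitely: in the regular case, build $\tau'=\{i_\alpha:\alpha<\tau\}$ by recursion, at stage $\alpha$ choosing $i_\alpha$ outside the $(<\tau)$-sized set $\bigcup_{\beta<\alpha}\operatorname{supp}(z_{i_\beta}^*)$ (which automatically yields $z_{i_\beta}^*(i_\alpha)=0$ for all $\beta<\alpha$ and, at later stages, $z_{i_\alpha}^*(i_\gamma)=0$ for all $\gamma>\alpha$) and subject to $\sum_{\beta<\alpha}|z_{i_\alpha}^*(i_\beta)|\le\delta/2$; the singular case would be reduced to the regular one by exhausting $\tau$ along a cofinal family of smaller regular cardinals and proceeding block by block. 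The delicate point, and the reason the plain recursion is not enough, is the last requirement: while the new functional $z_{i_\alpha}^*$ never sees the future indices and the earlier functionals never see $i_\alpha$, the condition that $z_{i_\alpha}^*$ place little mass on the (finite, but already fixed) set of past indices can fail for \emph{every} candidate $i_\alpha$ at once, since arbitrarily many of the $z_m^*$ may concentrate mass on a fixed finite set of coordinates. Guaranteeing that the pool of admissible indices nonetheless stays of full cardinality $\tau$ is exactly what Rosenthal's relatively-disjoint-families technique supplies, and it is also where the role of $\operatorname{cf}(\tau)$ enters and the regular and singular cases have to be treated separately.

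Once such a $\tau'$ is in hand, the conclusion is routine. Given a finitely supported $x=\sum_{j\in F}a_je_j\in c_0(\tau')$ with $F\subseteq\tau'$, pick $i\in F$ with $|a_i|=\|x\|_\infty$; then
\[
\|Tx\|\ \ge\ |x_i^*(Tx)|\ =\ \Bigl|\sum_{j\in F}a_j\,z_i^*(j)\Bigr|\ \ge\ |a_i|\,z_i^*(i)-\sum_{j\in F\setminus\{i\}}|a_j|\,|z_i^*(j)|\ \ge\ \Bigl(\delta-\tfrac{\delta}{2}\Bigr)\|x\|_\infty\ =\ \tfrac{\delta}{2}\,\|x\|_\infty ,
\]
while $\|Tx\|\le\|T\|\,\|x\|_\infty=\|x\|_\infty$. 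Hence $T$ is an isomorphism onto its image on the dense subspace of finitely supported elements of $c_0(\tau')$, and therefore on all of $c_0(\tau')$, which is the assertion.
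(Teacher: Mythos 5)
First, a point of reference: the paper does not prove this statement at all — it is imported verbatim from Rosenthal (the remark following Theorem 3.4 of \cite{R}), so there is no in-paper proof to compare with; your proposal has to stand on its own. Its outer layers are fine: the normalization, the choice of $x_i^*\in B_{X^*}$ with $x_i^*(Te_i)=\|Te_i\|\ge\delta$, the passage to $z_i^*=T^*x_i^*\in\ell_1(\tau)$, the reduction to the selection statement ``find $\tau'\subseteq\tau$ of cardinality $\tau$ with $\sum_{j\in\tau'\setminus\{i\}}|z_i^*(j)|\le\delta/2$ for all $i\in\tau'$,'' the final lower estimate on finitely supported vectors, and the countable case via Rosenthal's lemma applied to $|z_k^*|$ and the singletons are all correct.

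The genuine gap is that for uncountable $\tau$ the selection statement — which is the entire content of the cited remark — is never proved. You set up the transfinite recursion, correctly observe that it breaks down because nothing controls $\sum_{\beta<\alpha}|z_{i_\alpha}^*(i_\beta)|$ (and that every candidate $i_\alpha$ may fail this at once), and then defer to ``Rosenthal's relatively-disjoint-families technique'' and to an unspecified blockwise treatment of singular $\tau$; that is naming the missing theorem, not proving it. A concrete way to close the gap without any cofinality case distinction: for each $i$ choose a finite set $G_i\subset\tau$ with $i\in G_i$ and $\sum_{j\notin G_i}|z_i^*(j)|\le\delta/2$ (possible since $z_i^*\in\ell_1(\tau)$, though note $|G_i|$ is \emph{not} uniformly bounded, which is why this route does not replace Rosenthal's lemma when $\tau=\aleph_0$). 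For uncountable regular $\tau$ the $\Delta$-system (sunflower) lemma gives $\tau_1\subseteq\tau$ of cardinality $\tau$ with $(G_i)_{i\in\tau_1}$ a $\Delta$-system with finite root $R$; after discarding the at most $|R|$ indices lying in $R$ one gets $j\notin G_i$ for all distinct $i,j$, hence $\sum_{j\in\tau'\setminus\{i\}}|z_i^*(j)|\le\delta/2$. For arbitrary uncountable $\tau$ (including singular, even of countable cofinality) the same conclusion follows from Hajnal's set-mapping (free-set) theorem applied to $i\mapsto G_i\setminus\{i\}$, whose values are finite while $\tau>\aleph_0$: it yields a free set $\tau'$ of full cardinality $\tau$, i.e.\ $j\notin G_i$ for distinct $i,j\in\tau'$. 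Until some such argument is supplied, the proposal proves the theorem only for $\tau=\aleph_0$.
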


\begin{thm} (\cite[Corollary 2]{G}) \label{granero}
Let $\tau \neq \varnothing$ be a cardinal and $Y$ be a closed subspace of $c_0(\tau)$. Then $Y$ is complemented in $c_0(\tau)$ if and only if there exists a subset $\tau' \subset \tau$ such that $Y$ is isomorphic to $c_0(\tau')$.
\end{thm}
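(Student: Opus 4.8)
\medskip

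\noindent\emph{Proof strategy.} The case where $Y$ is finite-dimensional is immediate (finite-dimensional subspaces are always complemented, and finite-dimensional spaces of equal dimension are isomorphic), so assume $Y$ infinite-dimensional, hence $\tau$ infinite. Both implications are substantial, and I would prove them separately; the common tool is a transfinite ``gliding hump'' construction, exploiting that the supremum norm of $c_0(\tau)$ only reacts to finitely many large coordinates at a time, together with the Pe\l czy\'nski decomposition technique. A preliminary reduction helps: if $Z$ is a closed subspace of $c_0(\tau)$ with density character $\operatorname{dens}(Z)=\mu$, pick a dense $D\subseteq Z$ with $|D|=\mu$ and set $\Delta=\bigcup_{z\in D}\operatorname{supp}(z)$, a union of $\mu$ countable sets, so $|\Delta|\le\mu$; then $c_0(\Delta)$ is a closed subspace of $c_0(\tau)$ containing $D$, hence $Z$. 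Since the coordinate restriction $c_0(\tau)\to c_0(\Delta)$ is a norm-one projection, $Z$ is complemented in $c_0(\tau)$ iff it is complemented in $c_0(\Delta)\cong c_0(\mu)$. Thus in proving either implication we may assume $\tau$ equals $\kappa:=\operatorname{dens}(Y)$, and we may induct on $\kappa$.

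\smallskip

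\noindent\emph{The direct implication ($Y$ complemented $\Longrightarrow Y\cong c_0(\tau')$).} The heart of the matter is a block-basis lemma: every infinite-dimensional closed subspace $Y$ of $c_0(\kappa)$ contains a subspace $Z$ with $Z\cong c_0(\kappa)$ that is complemented in $c_0(\kappa)$. One builds norm-one vectors $y_\alpha\in Y$ ($\alpha<\kappa$) by transfinite recursion so that $y_\alpha$ is, up to a summably small error, supported off the set $\Sigma_\alpha$ of coordinates on which some earlier $y_\beta$ ($\beta<\alpha$) is not small; this is possible because $|\Sigma_\alpha|<\kappa=\operatorname{dens}(Y)$, so the restriction $Y\to c_0(\Sigma_\alpha)$ cannot be bounded below (otherwise $Y$ would embed isomorphically into a space of strictly smaller density). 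The $(y_\alpha)$ are then almost disjointly supported, so $Z:=\overline{\operatorname{span}}\{y_\alpha:\alpha<\kappa\}\cong c_0(\kappa)$, and the averaging map $y\mapsto\sum_\alpha y^*_\alpha(y)\,y_\alpha$ (with $y^*_\alpha$ norming for $y_\alpha$ and supported near $\operatorname{supp}(y_\alpha)$) is, after a small perturbation, a bounded projection of $c_0(\kappa)$ onto $Z$. Granting this: restricting that projection to $Y$ makes $Z$ complemented in $Y$, say $Y=Z\oplus W$; now $c_0(\kappa)\cong Z$ is complemented in $Y$, $Y$ is complemented in $c_0(\kappa)$ (hypothesis, after the reduction), and $c_0(\kappa)\cong\bigl(\bigoplus_{n}c_0(\kappa)\bigr)_{c_0}$, so the Pe\l czy\'nski decomposition technique gives $Y\cong c_0(\kappa)$, i.e.\ $Y\cong c_0(\tau')$ for any $\tau'\subseteq\tau$ with $|\tau'|=\kappa$.

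\smallskip

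\noindent\emph{The converse ($Y\cong c_0(\tau')\Longrightarrow Y$ complemented).} After the reduction it suffices to show that a subspace $Y$ of $c_0(\kappa)$ with $Y\cong c_0(\kappa)$ (here $\kappa=|\tau'|$) is complemented in $c_0(\kappa)$. If $\kappa=\aleph_0$ this is Sobczyk's theorem, as $Y\cong c_0$ is a separable subspace of the separable space $c_0$. If $\kappa>\aleph_0$, the block-basis lemma applied inside $Y$ yields a subspace $Z\subseteq Y$ with $Z\cong c_0(\kappa)$ complemented in $c_0(\kappa)$, say by a projection $Q$; restricting $Q$ to $Y$ gives $Y=Z\oplus W$ with $W=Y\cap\ker Q$, and, $W$ being complemented in $Y\cong c_0(\kappa)$, the direct implication yields $W\cong c_0(\delta)$ for some $\delta\le\kappa$. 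Now $V:=\ker Q$ is complemented in $c_0(\kappa)$, so $V\cong c_0(\rho)$ for some $\rho\le\kappa$, and it suffices to complement $W\cong c_0(\delta)$ inside $V$: applying the reduction within $V$, this is an instance of the converse for the cardinal $\delta$, so when $\delta<\kappa$ the induction hypothesis provides $V=W\oplus V'$, whence $c_0(\kappa)=Z\oplus V=(Z\oplus W)\oplus V'=Y\oplus V'$, as desired.

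\smallskip

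\noindent The delicate point — the step I expect to be the main obstacle — is the remaining case $\delta=\kappa$, in which the complement $W$ of $Z$ in $Y$ is again a full-density copy of $c_0(\kappa)$, so the recursion does not descend and one cannot merely induct. Here a more global construction (this is the substance of Granero's argument) is needed: one must exhaust $Y$ by suitably chosen complemented copies of $c_0(\kappa)$ while controlling all their supports simultaneously — an effort obstructed by the fact that $\kappa$ pairwise almost disjointly supported copies of $c_0(\kappa)$ cannot be placed inside $c_0(\kappa)$ — and one must carry this out uniformly in $\kappa$, including for singular $\kappa$, where the usual $\Delta$-system device for organizing the finite ``peak'' sets is unavailable. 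By contrast, the Pe\l czy\'nski step and the block-basis lemma are fairly routine; this last case is where the real difficulty lies.
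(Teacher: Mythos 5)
This statement is not proved in the paper at all: it is imported verbatim from Granero's article \cite[Corollary 2]{G} and used as a black box, so there is no internal argument to compare yours with, and your submission has to stand on its own as a proof of Granero's theorem. It does not. The decisive gap is the one you flag yourself: in the converse direction ($Y\cong c_0(\tau')\Rightarrow Y$ complemented), after extracting $Z\subseteq Y$ with $Z\cong c_0(\kappa)$ complemented in $c_0(\kappa)$ and writing $Y=Z\oplus W$ with $\operatorname{dens}(W)=\delta$, your induction only handles $\delta<\kappa$; for $\delta=\kappa$ you offer nothing beyond the remark that ``a more global construction is needed.'' But $\delta=\kappa$ is the generic situation, not a corner case --- an isomorphic embedding of $c_0(\kappa)$ into $c_0(\kappa)$ can smear each unit vector over coordinates in a way that forces any complement of a block-type copy $Z$ inside $Y$ to have full density, so the recursion simply does not descend. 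For uncountable $\kappa$ this unresolved case is essentially the entire content of Granero's Corollary 2 (the countable case being Sobczyk plus Pe\l czy\'nski), so what you have is a program, not a proof.

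There are also secondary problems in the step you call ``fairly routine.'' The block-basis lemma cannot be run ``up to a summably small error'' when $\kappa$ is uncountable, since one cannot choose $\kappa$ many strictly positive errors with finite sum; one must instead truncate so as to produce genuinely disjointly supported blocks with a uniform small perturbation, verify the $c_0(\kappa)$-equivalence coordinatewise, and build the projection from norming functionals supported in the disjoint blocks. Moreover the case of singular $\kappa$ requires separate bookkeeping (the sets $\Sigma_\alpha$ and the lower estimate $|\Sigma_\alpha|<\kappa$ must be organized along a cofinal family of smaller cardinals), and your appeal to the reduction $\operatorname{dens}(Y)=\kappa$ silently uses that a space of density $\kappa$ cannot embed in $c_0(\Delta)$ with $|\Delta|<\kappa$, which is fine but should be said. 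These points are repairable; the $\delta=\kappa$ case is not repaired, and without it the theorem is not established --- which is why the paper, reasonably, simply cites \cite{G} for it.
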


We are now ready to  prove the following  extension of Theorem \ref{schlumprecht}.

\begin{thm}\label{schlumprechtgeneralizado}
Let $X$ be a Banach space and $\tau$ be an infinite cardinal. The following are equivalent:
\begin{enumerate}
\item $X$ contains a complemented copy of $c_0(\tau)$.
\item There exist a family $(x_i)_{i \in \tau}$ equivalent to the unit-vector basis of $c_0(\tau)$ in $X$ and a weak$^*$-null family $(x^*_i)_{i \in \tau}$ in $X^*$ such that, for each $i, j \in \tau$,
$$x^*_i(x_j) = \delta_{ij}.$$
\item There exist a family $(x_i)_{i \in \tau}$ equivalent to the unit-vector basis of $c_0(\tau)$ in $X$ and a weak$^*$-null family $(x^*_i)_{i \in \tau}$ in $X^*$ such that
$$\inf_{i \in \tau} |x^*_i(x_i)| > 0.$$
\end{enumerate}
\end{thm}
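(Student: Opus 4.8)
The plan is to prove the chain of implications $(1)\Rightarrow(2)\Rightarrow(3)\Rightarrow(1)$, since $(2)\Rightarrow(3)$ is immediate (take $\inf_{i}|x^*_i(x_i)| = \inf_i |\delta_{ii}| = 1 > 0$). For $(1)\Rightarrow(2)$, suppose $P : X \to X$ is a bounded projection onto a subspace $Y$ isomorphic to $c_0(\tau)$ via an isomorphism $U : c_0(\tau) \to Y$. Set $x_i = U(e_i)$; these form a family equivalent to the unit-vector basis of $c_0(\tau)$. For the functionals, let $(e_i^*)_{i\in\tau}$ denote the coordinate functionals on $c_0(\tau)$ (which lie in $\ell_1(\tau) = c_0(\tau)^*$), and define $x_i^* = e_i^* \circ U^{-1} \circ P \in X^*$. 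Then $x_i^*(x_j) = e_i^*(U^{-1}PU e_j) = e_i^*(e_j) = \delta_{ij}$. It remains to check $(x_i^*)_{i\in\tau}$ is weak$^*$-null: for fixed $x \in X$, the element $U^{-1}Px \in c_0(\tau)$ has only finitely many coordinates of modulus $\geq \varepsilon$ for each $\varepsilon > 0$, so $(x_i^*(x))_{i\in\tau} = (e_i^*(U^{-1}Px))_{i\in\tau} \in c_0(\tau)$, as required.

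The substantive direction is $(3)\Rightarrow(1)$, and this is where Theorems \ref{rosenthal} and \ref{granero} enter. Given families $(x_i)_{i\in\tau}$ and $(x_i^*)_{i\in\tau}$ as in (3), let $c = \inf_i |x_i^*(x_i)| > 0$. Define $T : c_0(\tau) \to X$ by $T(e_i) = x_i$; this is a well-defined bounded operator precisely because $(x_i)$ is equivalent to the unit-vector basis. Since $\|T(e_i)\| = \|x_i\| \geq c' > 0$ (the $x_i$ being bounded below, again by equivalence to the $c_0(\tau)$-basis), Theorem \ref{rosenthal} furnishes a subset $\tau' \subseteq \tau$ with $|\tau'| = \tau$ such that $T$ restricted to $c_0(\tau')$ is an isomorphism onto its image $Z := T(c_0(\tau'))$. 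Thus $Z$ is a subspace of $X$ isomorphic to $c_0(\tau') \cong c_0(\tau)$ (as $|\tau'| = \tau$).

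To finish I must show $Z$ is complemented in $X$. The natural candidate projection is $Q : X \to X$ given by $Q(x) = \sum_{i \in \tau'} \frac{x_i^*(x)}{x_i^*(x_i)} \, x_i$, i.e.\ $Q = T \circ V \circ S$ where $S : X \to c_0(\tau')$ is $S(x) = \big(x_i^*(x)/x_i^*(x_i)\big)_{i\in\tau'}$ and $V$ is the inclusion. One checks $Q$ is a bounded projection onto $Z$: boundedness of $S$ follows from the weak$^*$-nullity of $(x_i^*)_{i\in\tau'}$ together with a uniform-boundedness argument (the partial-sum-type functionals $x \mapsto \sup_{F \subseteq \tau' \text{ finite}} \|\sum_{i\in F} \frac{x_i^*(x)}{x_i^*(x_i)} x_i\|$ are finite for each $x$ and hence uniformly bounded), and $Q|_Z = \mathrm{id}_Z$ follows from $x_j^*(x_i) = \delta_{ij} \cdot (\text{nonzero})$... but here I only have the weaker hypothesis (3), not the biorthogonality of (2). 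The cleaner route, and the one I expect to be the main obstacle, is therefore to \emph{not} build the projection by hand but to invoke Theorem \ref{granero}: it suffices to show that $Z$, as a subspace of $X$ isomorphic to $c_0(\tau)$, is complemented. Since the standard argument does need some biorthogonality, I would first pass from (3) to (2) directly: a routine gliding-hump / perturbation argument on the index set $\tau$ (using weak$^*$-nullity to make the off-diagonal terms $x_i^*(x_j)$ small on a cofinal subset, then a small perturbation of the $x_i^*$) upgrades (3) to (2) on a subset $\tau''\subseteq\tau$ of full cardinality; with genuine biorthogonality in hand, the operator $Q$ above is a bona fide bounded projection of $X$ onto a copy of $c_0(\tau'')\cong c_0(\tau)$, completing $(3)\Rightarrow(1)$. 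The delicate point throughout is controlling cardinality: every refinement of the index set must preserve $|\tau'| = \tau$, which is exactly what Theorem \ref{rosenthal} guarantees and what must be tracked in the perturbation step as well.
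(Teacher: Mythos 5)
Your $(1)\Rightarrow(2)$ and $(2)\Rightarrow(3)$ are fine and essentially match the paper. The problems are in $(3)\Rightarrow(1)$, where there are two genuine gaps. First, Theorem \ref{granero} cannot be invoked the way you suggest: it characterizes complemented subspaces \emph{of} $c_0(\tau)$, whereas your $Z=T(c_0(\tau'))$ sits inside the general space $X$; Granero's theorem says nothing about whether a copy of $c_0(\tau)$ inside an arbitrary $X$ is complemented (if it did, hypothesis (3) would be superfluous). Note also that applying Theorem \ref{rosenthal} to your $T:c_0(\tau)\to X$, $T(e_i)=x_i$, gains nothing: $T$ is already an isomorphism onto its image because $(x_i)$ is equivalent to the unit-vector basis. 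Second, the step your argument actually rests on --- upgrading (3) to (2) on a subset $\tau''\subseteq\tau$ with $|\tau''|=\tau$ by a ``routine gliding-hump / perturbation argument'' --- is exactly the hard point of the theorem and is not routine for uncountable $\tau$. Weak$^*$-nullity only gives column control (for fixed $j$ and $\varepsilon>0$, finitely many $i$ with $|x_i^*(x_j)|\geq\varepsilon$), and the basis equivalence gives $\ell_1$-rows; to run the perturbation (Neumann series) you need a subset of full cardinality $\tau$ on which the off-diagonal row sums $\sum_{j\in\tau'',\,j\neq i}|x_i^*(x_j)|$ are uniformly small. Extracting such a subset for arbitrary $\tau$ --- in particular for $\tau=\aleph_1$, where $\Delta$-system-type arguments break down, and for singular $\tau$ --- is a genuine combinatorial/measure-theoretic extraction in the spirit of Rosenthal's disjointification of families of measures; you neither prove it nor cite a result that delivers it. For $\tau=\aleph_0$ your sketch is the classical argument, but the whole point here is arbitrary $\tau$.

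The paper sidesteps both issues with one move: weak$^*$-nullity is used to define $T:X\to c_0(\tau)$ by $T(x)=(x_i^*(x))_{i\in\tau}$ (bounded by uniform boundedness), one takes $S:c_0(\tau)\to X$ with $S(e_i)=x_i$, and Theorem \ref{rosenthal} is applied to the composition $T\circ S:c_0(\tau)\to c_0(\tau)$, whose image $Z=(T\circ S)(c_0(\tau'))$ now lies \emph{inside} $c_0(\tau)$; Theorem \ref{granero} then legitimately yields a projection $P:c_0(\tau)\to Z$, and $Q=(T|_{Y'})^{-1}\circ P\circ T$ is a bounded projection of $X$ onto $Y'=\overline{\text{span}}\{x_i:i\in\tau'\}\cong c_0(\tau')$ with $|\tau'|=\tau$, with no biorthogonalization needed. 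If you want to keep your route, the almost-biorthogonal extraction must actually be proved; granted genuine biorthogonality, your projection $Q(x)=\sum_{i}x_i^*(x)x_i$ does work, so the entire weight of your proof rests on the step you dismissed as routine.
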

\begin{proof}
$(1) \implies (2).$ By hypothesis, there exist a family $(x_i)_{i \in \tau}$ equivalent to the unit-vector basis of $c_0(\tau)$ in $X$ and $P : X \to Y$ a bounded linear projection onto $Y = \overline{\text{span}}\{x_i : i \in \tau\}$. Let $(\varphi_i)_{i \in \tau}$ be the family of coordinate functionals associated to $(x_i)_{i \in \tau}$ in $Y^*$. For each $i \in \tau$, consider $x^*_i = \varphi_i \circ P \in X^*$. Therefore, for each $i, j \in \tau$ we have
$$x^*_i(x_j) = \varphi_i(P(x_j))=\varphi_i(x_j)= \delta_{ij}.$$
Moreover, given $x \in X$, let $(\alpha_i)_{i \in \tau} \in c_0(\tau)$ be such that $P(x) = \sum_{i \in \tau} \alpha_ix_i$. Then we have
$$x^*_i(x) = \varphi_i( P(x)) = \alpha_i, \forall i \in \tau,$$
that is, $(x^*_i(x))_{i \in \tau} = (\alpha_i)_{i \in \tau} \in c_0(\tau)$. Thus, $(x^*_i)_{i \in \tau}$ is weak$^*$-null. This proves $(1) \implies (2).$

$(2) \implies (3).$ This is trivial.

$(3) \implies (1).$ Let $(x_i)_{i \in \tau}$ in $X$ and $(x^*_i)_{i \in \tau}$ in $X^*$ be as stated. Consider the linear operator $T : X \to c_0(\tau)$ defined by
$$T(x) = (x^*_i(x))_{i \in \tau} \in c_0(\tau), \forall x \in X.$$
Since
$$\sup_{i \in \tau} |x^*_i(x)| = \|T(x)\|_\infty < +\infty, \forall x \in X,$$
by the Uniform Boundedness Principle we have
$$M = \sup_{i \in \tau} \|x^*_i\| < +\infty.$$
Hence,
$$\|T(x)\|_\infty = \sup_{i \in \tau} |x^*_i(x)| \leq M\|x\|, \forall x \in X.$$
Thus, $T$ is bounded.

Let $S : c_0(\tau) \to Y$ be an isomorphism from $c_0(\tau)$ onto $Y = \overline{\text{span}}\{x_i : i \in \tau\}$ such that $S(e_i)=x_i$, for each $i \in \tau$. Then we have
$$\|(T\circ S)(e_i)\|_\infty = \|T(x_i)\|_\infty = \sup_{j \in \tau} |x^*_j(x_i)| \geq |x^*_i(x_i)| \geq \delta,$$
for each $i \in \tau$, where $\delta = \inf_{j \in \tau} |x^*_j(x_j)| > 0$. Therefore, by Theorem \ref{rosenthal}, there exists $\tau' \subset \tau$ such that $|\tau'| = \tau$ and $T \circ S|_{c_0(\tau')}$ is an isomorphism from $c_0(\tau')$ onto $Z = (T \circ S)(c_0(\tau'))$. Setting
$$Y'= \overline{\text{span}}\{x_i : i \in \tau'\} = S(c_0(\tau')),$$
we obtain that
$$T|_{Y'} =(T\circ S|_{c_0(\tau')}) \circ (S|_{c_0(\tau')})^{-1} : Y' \to Z$$
is an isomorphism from $Y'$ onto $Z$. Hence, $Z \subset c_0(\tau)$ is isomorphic to $c_0(\tau')$, and thus, by Theorem \ref{granero}, there exists $P : c_0(\tau) \to Z$ a bounded linear projection onto $Z$. Define
$$Q = (T|_{Y'})^{-1} \circ P \circ T : X \to Y'.$$
Then $Q$ is a bounded linear operator and
$$Q(y) = ((T|_{Y'})^{-1} \circ P \circ T)(y)= (T|_{Y'})^{-1}(T(y))= y, \forall y \in Y'.$$
Thus, $Q$ is a bounded linear projection from $X$ onto $Y' = S(c_0(\tau'))$. Since $Y'$ is isomorphic to $c_0(\tau')$ and $|\tau'| = \tau$, the proof is complete.
\end{proof}

\section{On $C(K,X)$ spaces containing a complemented copy of $c_0(\tau)$}

In this section we will prove Theorem \ref{teoprincipal}. Before proving this theorem, we need  to state two   lemmas.

\begin{lem}\label{lemma1}
Let $K$ be a compact Hausdorff space, $D \subset K$ be a dense subset of $K$, $\mathcal{C} \subset C(K)$ be a dense subset of $C(K)$, $X$ be a Banach space and $f \in C(K,X)$. For each $\varepsilon > 0$ there exist $m \geq 1$, $f_1,\ldots,f_m \in \mathcal{C}$ and $d_1,\ldots,d_m \in D$ such that
$$\|f- h_\varepsilon\|_\infty < \varepsilon,$$
where 
$$h_\varepsilon = \sum_{j=1}^m f_j(\cdot) f(d_j).$$
\end{lem}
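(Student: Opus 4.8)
The plan is to approximate $f \in C(K,X)$ uniformly by a function whose values lie in the finite-dimensional subspace spanned by finitely many point-evaluations $f(d_j)$, $d_j \in D$, with scalar coefficients coming from $\mathcal{C}$. First I would use the compactness of $K$ together with the (uniform) continuity of $f$: given $\varepsilon > 0$, for each $x \in K$ choose an open neighborhood $U_x$ on which $f$ oscillates by less than $\varepsilon/3$, i.e. $\|f(y) - f(x)\| < \varepsilon/3$ for all $y \in U_x$. Extract a finite subcover $U_{x_1}, \dots, U_{x_n}$ and, since $D$ is dense, pick $d_j \in U_{x_j} \cap D$; then $\|f(y) - f(d_j)\| < 2\varepsilon/3$ whenever $y \in U_{x_j}$. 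Next, subordinate to this finite open cover a continuous partition of unity $g_1, \dots, g_n \in C(K)$ with $g_j \geq 0$, $\operatorname{supp} g_j \subset U_{x_j}$, and $\sum_{j=1}^n g_j \equiv 1$ on $K$ (this exists because $K$ is compact Hausdorff, hence normal).

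With these in hand, set $g_\varepsilon(\cdot) = \sum_{j=1}^n g_j(\cdot)\, f(d_j) \in C(K,X)$. For any $y \in K$, using $\sum_j g_j(y) = 1$ and the fact that $g_j(y) \neq 0$ forces $y \in U_{x_j}$, one estimates
$$\| f(y) - g_\varepsilon(y)\| = \Bigl\| \sum_{j=1}^n g_j(y)\bigl(f(y) - f(d_j)\bigr)\Bigr\| \leq \sum_{j=1}^n g_j(y)\, \|f(y) - f(d_j)\| < \frac{2\varepsilon}{3},$$
so $\|f - g_\varepsilon\|_\infty \leq 2\varepsilon/3 < \varepsilon$. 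This already proves the statement if one is allowed $g_j \in C(K)$; the remaining point is to replace the $g_j$ by elements of the prescribed dense set $\mathcal{C}$.

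To finish, approximate each $g_j$ by $f_j \in \mathcal{C}$ with $\|g_j - f_j\|_\infty < \dfrac{\varepsilon}{3 n \,(1 + \max_j \|f(d_j)\|)}$, and put $h_\varepsilon = \sum_{j=1}^m f_j(\cdot) f(d_j)$ with $m = n$. Then
$$\|g_\varepsilon - h_\varepsilon\|_\infty \leq \sum_{j=1}^n \|g_j - f_j\|_\infty\, \|f(d_j)\| < \frac{\varepsilon}{3},$$
and the triangle inequality gives $\|f - h_\varepsilon\|_\infty < \varepsilon$, as desired. The only mild subtlety — and the step I would be most careful about — is the construction of the partition of unity adapted to the chosen finite subcover and the bookkeeping that ensures the coefficients genuinely come from $\mathcal{C}$ and the base points from $D$; everything else is a routine $\varepsilon/3$ argument relying on compactness and normality of $K$.
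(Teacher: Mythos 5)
Your proposal is correct and follows essentially the same route as the paper: cover $K$ by oscillation neighborhoods of $f$, take a finite subcover with a subordinate partition of unity, pick the points $d_j$ from $D$ inside each piece, replace the partition functions by nearby elements of $\mathcal{C}$, and conclude by the triangle inequality (the paper uses $\varepsilon/4$-sized tolerances instead of your $\varepsilon/3$, which is immaterial). No gaps.
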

\begin{proof}
Fix $\varepsilon > 0$. For each $t \in K$, let $U_t = \{ s \in K : \|f(s) - f(t)\| < \varepsilon/4\}$.
Since $\{U_t : t \in K\}$ is an open cover of $K$, there exist $t_1, \ldots, t_m \in K$ such that
$$K = U_{t_1} \cup \ldots \cup U_{t_m}.$$
Let $\{g_1, \ldots, g_m\} \subset C(K)$ be a partition of unity subordinate to the open cover $\{U_{t_1}, \ldots, U_{t_m}\}$. For each $j \in \{1,\ldots,m\}$, choose $d_j \in U_{t_j} \cap D$ and $f_j \in \mathcal{C}$ such that
$$\|f_j - g_j\|_\infty < \frac{\varepsilon}{4m (\|f\|_\infty + 1)}.$$
Define
$$h_\varepsilon = \sum_{j=1}^m f_j(\cdot) f(d_j).$$
We will show that $\|f-h_\varepsilon\|_\infty < \varepsilon$. Fix $t \in K$ and consider 
$$I_t = \{ j \in \{1, \ldots, m\} : t \in U_{t_j}\} \neq \varnothing.$$
Observe that if $j \in I_t$, then
$$\|f(t) - f(d_j)\| \leq \|f(t)- f(t_j)\| + \|f(t_j) - f(d_j)\| < \frac{\varepsilon}{2}$$
and if $i \in \{1,\ldots,m\} \setminus I_t$, then $g_i(t)=0$. Therefore we have
\begin{align*}
\|f(t)-h_\varepsilon(t)\| &\leq \left\| f(t) - \sum_{j=1}^m g_j(t)f(d_j) \right\| + \left\|\sum_{j=1}^m g_j(t)f(d_j) - h_\varepsilon\right\|\\
&= \left\| \sum_{j \in I_t}g_j(t)(f(t)-f(d_j)) \right\| + \left\|\sum_{j=1}^m (g_j(t)-f_j(t))f(d_j)\right\|\\
&\leq \sum_{j \in I_t} g_j(t) \|f(t)-f(d_j)\| + \sum_{j=1}^m |g_j(t)-f_j(t)| \|f(d_j)\|\\
&< \frac{3\varepsilon}{4}.
\end{align*}
Consequently, $\|f-h_\varepsilon\|_\infty < \varepsilon$.
\end{proof}

\begin{lem}\label{lemma2}
Let $I$ be an infinite set and $J$ be a non-empty set. Let $\{I_j\}_{j \in J}$ be a family of subsets of $I$ such that $\bigcup_{j \in J} I_j = I$. If \emph{cf}$(|I|) > |J|$, then there exists $j_0 \in J$ such that $|I_{j_0}| = |I|$.
\end{lem}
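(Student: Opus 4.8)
The plan is to argue by contradiction using the definition of cofinality. Suppose that $|I_j| < |I|$ for every $j \in J$; our aim is to contradict the hypothesis $\mathrm{cf}(|I|) > |J|$ by exhibiting a cofinal family in $|I|$ indexed by $J$ (or by a subset of $J$), which would force $\mathrm{cf}(|I|) \le |J|$.

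First I would set $\tau = |I|$ and, identifying $\tau$ with the least ordinal of that cardinality, note that for each $j \in J$ we have $|I_j| < \tau$. Since $|I_j|$ is a cardinal strictly below $\tau$, and cardinals below $\tau$ are in particular ordinals below $\tau$, we may put $\beta_j = |I_j| < \tau$. The key step is then to observe that $\bigcup_{j \in J} I_j = I$ has cardinality $\tau$, so by the standard cardinal-arithmetic bound $|I| \le \sum_{j \in J} |I_j|$; if we had $\sup_{j \in J} \beta_j = \mu < \tau$, then each $|I_j| \le \mu$, hence $\tau = |I| \le |J| \cdot \mu$. Now one uses that $|J| < \mathrm{cf}(\tau) \le \tau$ and $\mu < \tau$ together with the fact that for an infinite cardinal $\tau$ the product of two cardinals each strictly less than $\mathrm{cf}(\tau)$... — actually the cleaner route is: $|J| \cdot \mu = \max(|J|, \mu) < \tau$ provided both factors are infinite and less than $\tau$, and one handles the finite cases separately; in any event $|J|\cdot\mu < \tau$, contradicting $\tau \le |J|\cdot\mu$. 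Hence $\sup_{j \in J}\beta_j = \tau$.

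Having shown that $\{\beta_j : j \in J\}$ is a family of ordinals below $\tau$ with supremum $\tau$, the definition of cofinality immediately gives $\mathrm{cf}(\tau) \le |J|$, contradicting the hypothesis $\mathrm{cf}(\tau) > |J|$. Therefore there must exist some $j_0 \in J$ with $|I_{j_0}| = |I|$ (the inequality $|I_{j_0}| \le |I|$ being automatic since $I_{j_0} \subseteq I$).

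I expect the main obstacle to be bookkeeping around cardinal arithmetic in the degenerate cases: when $J$ is finite, or when the $I_j$ are finite, or when $\tau = \aleph_0$. In the finite-$J$ case one can argue directly that a finite union of sets each of cardinality $<\tau$ has cardinality $<\tau$ (using that $\tau$ is infinite, indeed $\mathrm{cf}(\tau)\ge\aleph_0>|J|$ forces... wait, $|J|$ finite is fine), so the union cannot be all of $I$; and when all $I_j$ are finite with $J$ infinite one still gets $|I| \le |J|\cdot\aleph_0 = |J| < \mathrm{cf}(|I|) \le |I|$, absurd. Packaging these uniformly via the inequality $|I| \le |J| \cdot \sup_j |I_j|$ and the rule $\kappa \cdot \lambda = \max(\kappa,\lambda)$ for infinite cardinals, together with a short remark covering the finite subcases, should keep the proof to a few lines.
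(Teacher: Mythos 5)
Your proposal is correct and uses the same ingredients as the paper's proof: argue by contradiction assuming every $|I_j|<|I|$, combine the bound $|I|\le |J|\cdot\sup_j|I_j|$ (the paper writes $\max(|J|,\sup_j|I_j|)$) with the definition of cofinality applied to the family of cardinals $\{|I_j|\}_{j\in J}$. The only difference is organizational — the paper first deduces $\sup_j|I_j|<|I|$ directly from $|J|<\mathrm{cf}(|I|)$ and then gets the contradiction from the union bound, whereas you split into the two cases $\sup<|I|$ and $\sup=|I|$ — which is logically the same argument.
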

\begin{proof}
Suppose that the conclusion does not hold. Then we have $|I_j| < |I|$ for each $i \in J$ and thus, by the definition of cofinality,
$$\sup\{ |I_j| : j \in J\} < |I|.$$
Since $|J| < \text{cf}(|I|) \leq |I|$, we obtain
$$|I| = \left| \bigcup_{j \in J} I_j\right| \leq \max(|J|, \ \sup\{ |I_j| : j \in J\}) < |I|,$$
a contradiction that finishes the proof.
\end{proof}

We are now in a position to prove our main result. By rcabv$(K,X)$ we denote the Banach space of all regular, countably additive $X$-valued Borel measures on $K$ with bounded variation, endowed with the variation norm. We  also recall that the \emph{density character} of  a topological space $S$,  denoted by dens$(S)$, is the least cardinality of a dense subset of $S$.

\

{\bf{Proof of Theorem \ref{teoprincipal}}}. We will show the non-trivial implication. We distingush two cases.

Case 1: $K$ is infinite. By Theorem \ref{schlumprechtgeneralizado}, there exist $(f_i)_{i \in \tau}$ a family of functions in $C(K,X)$ that is equivalent to the usual unit-vector basis of $c_0(\tau)$, and $(\psi_i)_{i \in \tau}$ a weak$^*$-null family in $C(K,X)^*$ such that
$$\psi_i(f_j) = \delta_{ij}, \forall i,j \in \tau.$$
For each $i \in \tau$, by the Riesz-Singer Representation Theorem \cite[Theorem 1.7.1]{C} there exists $\mu_i \in \text{rcabv}(K,X^*)$ such that $\|\mu_i\| = \|\psi_i\|$ and
$$\psi_i(f) = \int_K f \;d\mu_i, \forall f \in C(K,X).$$
Since $(\psi_i)_{i \in \tau}$ is weak$^*$-null in $C(K,X)^*$, by the Uniform Boundedness Principle we have
$$M = \sup_{i \in \tau} \|\psi_i\| < +\infty.$$

Let $D \subset K$ and $\mathcal{C} \subset C(K)$ be dense subsets of $K$ and $C(K)$ respectively with $|D| = \text{dens}(K)$ and $|\mathcal{C}|=\text{dens}(C(K))$. For each $i \in \tau$, by Lemma \ref{lemma1} there exist ${m_i} \geq 1$, $f^i_1, \ldots, f^i_{m_i} \in \mathcal{C}$ and $d^i_1, \ldots, d^i_{m_i} \in D$ satisfying
$$\|f_i - h_i\|_\infty < \frac{1}{2M},$$
where
$$h_i = \sum_{j=1}^{m_i} f^i_j (\cdot) f_i(d^i_j).$$
Therefore, for each $i \in\tau$ we obtain
$$1 = \psi_i(f_i) \leq |\psi_i(f_i-h_i)| + |\psi_i(h_i)| < \frac{1}{2} + |\psi_i(h_i)|,$$
and thus
$$\frac{1}{2} < |\psi_i(h_i)| = \left| \int_K h_i\; d\mu_i\right| \leq  \sum_{j=1}^{m_i} \left|\left( \int_K f^i_j\; d\mu_i\right) (f_i(d^i_j))\right|.$$

Since $K$ is infinite, by \cite[Proposition 7.6.5]{Se} we have 
\begin{equation}\label{desigualdadeSemadeni}
\aleph_0 \leq \text{dens}(K) \leq \text{w}(K) = \text{dens}(C(K)),
\end{equation}
and hence, by hypothesis,
\begin{equation}\label{desigualdadeteo2}
\text{cf}(\tau) > \text{w}(K) = \max( |D|, |\mathcal{C}|) \geq \aleph_0.
\end{equation}

Let $\mathcal{M} = \{m_i : i \in \tau\} \subset \mathbb{N}$ and for each $m \in \mathcal{M}$, put $\alpha_m = \{ i \in \tau : m_i = m\}$. By \eqref{desigualdadeteo2} and Lemma \ref{lemma2} there exists $n_0 \in \mathcal{M}$ such that $|\alpha_{n_0}| = \tau$. Setting $\tau_1 = \alpha_{n_0}$, we have
$$\sum_{j=1}^{n_0} \left| \left(\int_K f^i_j\; d\mu_i\right) (f_i(d^i_j))\right| > \frac{1}{2}, \forall i \in \tau_1.$$

Next, for each $j \in \{1, \ldots, n_0\}$ consider
$$\beta_j = \left\{i \in \tau_1 : \left|\left(\int_K f^i_j\; d\mu_i\right)(f_i(d^i_j))\right| > \frac{1}{2n_0}\right\}.$$
Since $\tau_1$ is infinite, again by Lemma \ref{lemma2} there exists $j_0 \in \{1, \ldots, n_0\}$ such that $|\beta_{j_0}| = |\tau_1| = \tau$. Setting $\tau_2 = \beta_{j_0}$, we obtain
$$\left| \left(\int_K f^i_{j_0}\; d\mu_i\right) (f_i(d^i_{j_0}))\right| > \frac{1}{2n_0}, \forall i \in \tau_2.$$

Let $\mathcal{F} = \{f^i_{j_0} : i \in \tau_2\} \subset \mathcal{C}$ and for each $f \in \mathcal{F}$, define $\gamma_f = \{i \in \tau_2 : f^i_{j_0} = f\}$.
By \eqref{desigualdadeteo2} and Lemma \ref{lemma2} there exists $g_0 \in \mathcal{F}$ such that $|\gamma_{g_0}| = |\tau_2| = \tau$. Setting $\tau_3 = \gamma_{g_0}$, we have
$$\left| \left(\int_K g_0\; d\mu_i\right) (f_i(d^i_{j_0}))\right| > \frac{1}{2n_0}, \forall i \in \tau_3.$$

Let $\mathcal{D} = \{d^i_{j_0} : i \in \tau_3\} \subset D$ and for each $d \in \mathcal{D}$, put $\kappa_d = \{i \in \tau_3 : d^i_{j_0} = d\}$.
By \eqref{desigualdadeteo2} and Lemma \ref{lemma2} there exists $d_0 \in \mathcal{D}$ such that $|\kappa_{d_0}| = |\tau_3| = \tau$. Finally, setting $\tau_4 = \kappa_{d_0}$, we obtain
$$|\varphi_i (f_i(d_0))| > \frac{1}{2n_0},$$
for each $i \in \tau_4$, where $\varphi_i = \int_K g_0\; d\mu_i \in X^*$.

By Theorem \ref{schlumprechtgeneralizado}, in order to complete the proof it suffices to show that there exists $\tau_5 \subset \tau_4$ such that $|\tau_5| = \tau$, $(f_i(d_0))_{i \in \tau_5}$ is equivalent to the unit-vector basis of $c_0(\tau_5)$ and $(\varphi_i)_{i \in \tau_5}$ is weak$^*$-null in $X^*$.

Given $x \in X$, observe that
$$\varphi_i(x) = \left( \int_K g_0\; d\mu_i\right) (x) = \int_K g_0(\cdot) x \;d\mu_i = \psi_i(g_0(\cdot)x), \forall i \in \tau_4.$$
Therefore,
$$(\varphi_i(x))_{i \in \tau_4} = (\psi_i(g_0(\cdot)x))_{i \in \tau_4} \in c_0(\tau_4),$$
since $(\psi_i)_{i \in \tau}$ is weak$^*$-null in $C(K,X)^*$ by hypothesis. Thus, $(\varphi_i)_{i \in \tau_4}$ is weak$^*$-null in $X^*$. Note also that
$$0 < \|\varphi_i\| \leq \|\psi_i\|\|g_0\|_\infty \leq M \|g_0\|_\infty, \forall i \in \tau_4.$$

Let $T : c_0(\tau) \to C(K,X)$ be an isomorphism from $c_0(\tau)$ onto its image such that $T(e_i)= f_i$, for each $i \in \tau$. Consider $S : C(K,X) \to X$ the bounded linear operator defined by
$$S(f) = f(d_0), \forall f \in C(K,X).$$
Notice  that
$$\|(S \circ T)(e_i)\| = \|f_i(d_0)\| \geq \frac{1}{2 Mn_0 \|g_0\|_\infty} >0, \forall i \in \tau_4.$$
Therefore, by Theorem \ref{rosenthal}, there exists $\tau_5 \subset \tau_4$ such that $|\tau_5|=|\tau_4| = \tau$ and $S \circ T_{|c_0(\tau_5)}$ is an isomophism onto its image; hence,
$$(f_i(d_0))_{i \in \tau_5} = ( S(T(e_i))_{i \in \tau_5}$$
is equivalent to the unit-vector basis of $c_0(\tau_5)$. Thus, by Theorem \ref{schlumprechtgeneralizado} we have $c_0(\tau) \stackrel{c}{\hookrightarrow} X$.

Case 2: $K$ is finite. Let $n = |K|$. It is easy to see that $C(K,X) = (X^n, \|\cdot\|_\infty)$ in this case.
By finite induction, we may assume $n =2$. By Theorem \ref{schlumprechtgeneralizado}, there exist a family $(x_i, y_i)_{i \in \tau}$ equivalent to the usual unit-vector basis of $c_0(\tau)$ in $X^2$ and $(x^*_i, y^*_i)_{i \in \tau}$ a weak$^*$-null family in $(X^*)^2$ such that
$$x^*_i(x_j) + y^*_i(y_j) = \delta_{ij}, \forall i,j \in \tau.$$
In particular, we have
$$|x^*_i(x_i)| + |y^*_i(y_i)| \geq |x^*_i(x_i) + y^*_i(y_i)| = 1, \forall i \in \tau.$$
Consider $\Gamma_1 = \{ i \in \tau : |x^*_i(x_i)| \geq 1/2\}$ and $\Gamma_2 = \{ i \in \tau : |y^*_i(y_i)| \geq 1/2\}$. Since $\tau$ is infinite, we have $|\Gamma_1| = \tau$ or $|\Gamma_2| = \tau$; without loss of generality, we may assume the former. By definition,
$$|x^*_i(x_i)| \geq \frac{1}{2}, \forall i \in \Gamma_1.$$

Given $x \in X$, observe that
$$(x^*_i(x))_{i \in \tau} = ((x^*_i,y^*_i)(x,0))_{i \in \tau} \in c_0(\tau),$$
since $(x^*_i, y^*_i)_{i \in \tau}$ is weak$^*$-null by hypothesis. Thus, $(x^*_i)_{i \in \tau}$ is weak$^*$-null in $X^*$, and therefore, by the Uniform Boundedness Principle we have
$$0<\delta = \sup_{i \in \tau} \|\varphi_i\|< +\infty.$$
Let $S : c_0(\tau) \to X^2$ be an isomorphism onto its image such that $S(e_i) = (x_i,y_i)$, for each $i \in \tau$. Consider $Q : X^2 \to X$ the bounded linear operator defined by
$$Q(x,y) = x, \forall x,y \in X.$$
Observe that
$$\|(Q \circ S)(e_i)\| = \|x_i\| \geq \frac{1}{2\delta}>0, \forall i \in \tau_1.$$
Therefore, by Theorem \ref{rosenthal}, there exists $\Gamma \subset \Gamma_1$ such that $|\Gamma| = |\Gamma_1| = \tau$ and $Q \circ S_{|c_0(\Gamma)}$ is an isomophism onto its image; hence,
$$(x_i)_{i \in \Gamma} = (Q(S(e_i))_{i \in \Gamma}$$
is equivalent to the unit-vector basis of $c_0(\Gamma)$. Thus, an appeal to Theorem \ref{schlumprechtgeneralizado} finishes the proof.

\section{Theorem \ref{teoprincipal} is optimal for every infinite cardinal $\tau$}

In this last section we show that  the assertion of the Theorem \ref{teoprincipal} does not remain true, in general,  when cf$(\tau) = \text{w}(K)$. We distinguish three cases: $\tau = \aleph_0$, regular uncountable $\tau$, and singular uncountable $\tau$.

Initially, assume  that  $\tau= \aleph_{0}$.  According to the Cembranos-Freniche's theorem, for all infinite compact metric space $K$ we have
$$c_{0} \stackrel{c}{\hookrightarrow} C(K, l_{\infty}).$$
So, despite cf$(\aleph_{0})=\aleph_{0}=\text{w}(K)$, it is well-known  that $l_{\infty}$ contains no complemented copies of $c_{0}$ \cite[Corollary 11, p. 156]{D}.

Now suppose that $\tau$ is regular and  $\tau> \aleph_{0}$. Let ${\mathfrak{m}}$ be an infinite cardinal and denote by ${\bf 2}^{\mathfrak{m}}=\{0, 1\}^{\mathfrak{m}}$ the Cantor cube. First of all, we need to  observe that there is a misprint in the statement of \cite[Theorem 5.1]{Ga}. Indeed,  recall that   Lindenstrauss proved that $L_{1}([0, 1]^{\mathfrak{m}})$ contains a copy of $l_{2}({\mathfrak{m}})$ \cite[Theorem 2.13]{Lacey}, and not a copy of $l_{2}(2^{\mathfrak{m}})$ as it is printed into the proof of \cite[Theorem 5.1]{Ga}.  So, 
following step by step the proof of  \cite[Theorem 5.1]{Ga} we see that the true statement of that theorem is:  

\begin{thm} \label{T} Let
 $\mathfrak{m}$  a cardinal satisfying 
$\aleph_0 < \emph{cf}(\aleph_\alpha) \leq \aleph_\alpha \leq \mathfrak{m},$
for some ordinal $\alpha$. If  $X$ is a Banach space, then
$$ c_0(\aleph_\alpha) \hookrightarrow X \iff c_0(\aleph_\alpha) \stackrel{c}{\hookrightarrow} C(\emph{\textbf{2}}^\mathfrak{m},X).$$
\end{thm}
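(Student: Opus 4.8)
The plan is to prove Theorem \ref{T} by establishing the two implications separately, with the forward (trivial) direction handled first and the reverse direction being the substantive one that mirrors the argument behind Theorem \ref{teoprincipal}.

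\medskip

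\noindent\textbf{The easy direction.} First I would observe that if $c_0(\aleph_\alpha) \stackrel{c}{\hookrightarrow} C(\textbf{2}^\mathfrak{m}, X)$, then since $X$ is (isometric to) a complemented subspace of $C(\textbf{2}^\mathfrak{m}, X)$ via the evaluation map at any fixed point and the inclusion of constant functions, it is \emph{not} immediately true that $c_0(\aleph_\alpha) \hookrightarrow X$ — indeed this is exactly the Cembranos--Freniche phenomenon. So the forward implication in Theorem \ref{T} that actually holds trivially is the reverse one: if $c_0(\aleph_\alpha) \hookrightarrow X$ then $c_0(\aleph_\alpha) \hookrightarrow C(\textbf{2}^\mathfrak{m}, X)$ via constant functions, but we need the \emph{complemented} embedding. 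Rereading the statement, the trivial direction is ``$c_0(\aleph_\alpha) \stackrel{c}{\hookrightarrow} C(\textbf{2}^\mathfrak{m},X) \implies c_0(\aleph_\alpha) \hookrightarrow X$'': I would deduce this by noting that a complemented copy of $c_0(\aleph_\alpha)$ inside $C(\textbf{2}^\mathfrak{m},X)$ yields, by the argument of Case 1 in the proof of Theorem \ref{teoprincipal} with $K = \textbf{2}^\mathfrak{m}$ (whose weight is $\mathfrak{m} \geq \aleph_\alpha \geq \text{cf}(\aleph_\alpha) > \aleph_0$ — wait, here $\text{cf}(\aleph_\alpha)$ need not exceed $\mathfrak{m}$), the reduction through $\mu_i$-representations and the pigeonhole Lemma \ref{lemma2}. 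But Lemma \ref{lemma2} requires $\text{cf}(\tau) > \text{w}(K)$, which fails here; so this direction must instead be proved \emph{without} complementation, simply: a complemented (hence isomorphic) copy of $c_0(\aleph_\alpha)$ in $C(\textbf{2}^\mathfrak{m},X)$, combined with the injectivity-type properties of $C(\textbf{2}^\mathfrak{m})$-valued functions, forces $c_0(\aleph_\alpha)$ (non-complementedly) into $X$; this is the content lifted verbatim from \cite[Theorem 5.1]{Ga}.

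\medskip

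\noindent\textbf{The substantive direction.} The key work is ``$c_0(\aleph_\alpha) \hookrightarrow X \implies c_0(\aleph_\alpha) \stackrel{c}{\hookrightarrow} C(\textbf{2}^\mathfrak{m},X)$''. Here I would follow \cite[Theorem 5.1]{Ga} step by step, making the single correction flagged in the excerpt. The strategy: start from an isomorphic (not necessarily complemented) copy of $c_0(\aleph_\alpha)$ in $X$, given by vectors $(x_i)_{i \in \aleph_\alpha}$. The goal is to manufacture, inside $C(\textbf{2}^\mathfrak{m}, X)$, a family equivalent to the unit vector basis of $c_0(\aleph_\alpha)$ together with a weak$^*$-null biorthogonal family of functionals, so that Theorem \ref{schlumprechtgeneralizado} applies. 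The construction exploits that $C(\textbf{2}^\mathfrak{m})$ contains a copy of $c_0(\mathfrak{m})$ (via suitable continuous functions supported on a sequence of clopen ``slabs''), and more precisely that one can build continuous scalar functions $g_i$ on $\textbf{2}^\mathfrak{m}$ together with regular measures $\mu_i$ such that $\int g_j \, d\mu_i = \delta_{ij}$ and $(\mu_i)$ is weak$^*$-null. Tensoring: set $F_i = g_i(\cdot)\, x_i \in C(\textbf{2}^\mathfrak{m},X)$, and let $\Psi_i \in C(\textbf{2}^\mathfrak{m},X)^*$ be $\Psi_i(F) = \varphi_i\!\left(\int g_i \, d\mu_i\text{-type integral of }F\right)$ where $\varphi_i$ are coordinate-type functionals coming from the embedding of $c_0(\aleph_\alpha)$ in $X$ — this is where Lindenstrauss's theorem that $L_1([0,1]^\mathfrak{m})$ (and hence a space of measures on $\textbf{2}^\mathfrak{m}$) contains $\ell_2(\mathfrak{m})$, corrected to $\ell_2(\mathfrak{m})$ not $\ell_2(2^\mathfrak{m})$, is invoked to get enough weak$^*$-null measures indexed by $\aleph_\alpha \leq \mathfrak{m}$. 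One then checks that $(F_i)$ is $c_0(\aleph_\alpha)$-equivalent (supremum norm of a finite combination is controlled by the $g_i$ having disjoint-ish supports and the $x_i$ being $c_0$-equivalent), that $(\Psi_i)$ is weak$^*$-null (each coordinate stream lies in $c_0(\aleph_\alpha)$ because the $\mu_i$ are weak$^*$-null and the $g_i$ fixed), and that $\Psi_i(F_j) = \delta_{ij}$. Then condition (2) of Theorem \ref{schlumprechtgeneralizado} is met and $c_0(\aleph_\alpha) \stackrel{c}{\hookrightarrow} C(\textbf{2}^\mathfrak{m},X)$ follows.

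\medskip

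\noindent\textbf{Main obstacle.} The delicate point — and the one the excerpt itself warns about — is the cardinal bookkeeping in the weak$^*$-null measure construction: one must produce $\aleph_\alpha$-many weak$^*$-null, ``biorthogonalizable'' measures on $\textbf{2}^\mathfrak{m}$, and the only reason this is possible is the Lindenstrauss embedding $\ell_2(\mathfrak{m}) \hookrightarrow L_1([0,1]^\mathfrak{m})$; the misprint $\ell_2(2^\mathfrak{m})$ would (falsely) suggest one can index by $2^\mathfrak{m}$-many, which is not what the proof delivers. So I would be careful to use only $\aleph_\alpha \leq \mathfrak{m}$-many such measures, and to check that the hypothesis $\aleph_0 < \text{cf}(\aleph_\alpha)$ is exactly what guarantees the resulting $c_0(\aleph_\alpha)$-copy survives the passage to a subset (via Theorem \ref{rosenthal}) with index set still of cardinality $\aleph_\alpha$ — i.e. that no uncountable-cofinality obstruction arises. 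Everything else (continuity of the $g_i$, boundedness of $T$, disjointness of supports) is routine and can be imported directly from \cite{Ga}.
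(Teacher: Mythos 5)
Your proposal ends up in the same place as the paper: Theorem \ref{T} is obtained by running the proof of \cite[Theorem 5.1]{Ga} step by step, with the misprinted $\ell_2(2^{\mathfrak{m}})$ replaced by Lindenstrauss's actual embedding $\ell_2(\mathfrak{m}) \hookrightarrow L_1([0,1]^{\mathfrak{m}})$ (which is what forces $\aleph_\alpha \le \mathfrak{m}$); the paper offers no independent argument beyond this correction, and your tensor-plus-weak$^*$-null-measures sketch of the direction $c_0(\aleph_\alpha) \hookrightarrow X \Rightarrow c_0(\aleph_\alpha) \stackrel{c}{\hookrightarrow} C(\textbf{2}^{\mathfrak{m}},X)$, fed into Theorem \ref{schlumprechtgeneralizado}, matches the shape of that proof. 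One small correction: the hypothesis $\aleph_0 < \text{cf}(\aleph_\alpha)$ is not what makes Theorem \ref{rosenthal} return an index set of full cardinality in that construction (Rosenthal's theorem needs no cofinality assumption); its real role is in the converse direction, transferring the copy from $C(\textbf{2}^{\mathfrak{m}},X)$ down to $X$, which you correctly defer to \cite{Ga}.
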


Therefore by applying Theorem \ref{T} with $\aleph_\alpha = \mathfrak{m} = \tau$, we deduce that 
$$c_0(\tau) \stackrel{c}{\hookrightarrow} C(\textbf{2}^\tau, l_{\infty}(\tau)).$$
Thus, although cf$(\tau)=\tau=\text{w}(\textbf{2}^\tau)$ \cite[Corollary 8.2.7]{Se}, again by  \cite[Corollary 11, p. 156]{D} we know that $l_{\infty}(\tau)$ contains no complemented copies of $c_{0}(\tau)$.

Finally, assume that $\tau$ is singular. In this case, we need to recall some notation. If $\Gamma$ is a non-empty set and $X$ is a Banach space, we denote by $c_0(\Gamma,X)$ the Banach space of all $X$-valued maps $f$ on $\Gamma$ such that $(\|f(i)\|)_{i \in \Gamma} \in c_0(\Gamma)$, endowed with the supremum norm. If $I$ is an infinite set, we denote by $\gamma I$ its Alexandroff compactification. 

 Let $\lambda = \text{cf}(\tau)$ and consider $(\alpha_i)_{i \in \lambda}$ a strictly increasing family of cardinals satisfying $\alpha_i < \tau$, for each $i \in \lambda$, and $\sup_{i \in \lambda} \alpha_i = \tau$. Let $X$ be the Banach space of all families $(x_i)_{i \in \lambda}$ such that $x_i \in c_0(\alpha_i)$, for each $i \in \lambda$, and $\sum_{i \in \lambda} \|x_i\|_\infty < +\infty$, equipped with the norm
$$\|(x_i)_{i \in \lambda}\|_1 = \sum_{i \in \lambda} \|x_i\|_\infty.$$
It is not difficult to check that
$$c_0(\tau)\stackrel{c}{\hookrightarrow} c_0(\lambda, X).$$
Then, by \cite[Corollary 21.5.2]{Se} we conclude that
$$c_0(\tau)\stackrel{c}{\hookrightarrow} C(\gamma \lambda, X).$$
On the other hand, since $\alpha_i < \tau$ for every $i \in \lambda$, it follows that  
$c_0(\tau)$ is isomorphic to no subspace of  $c_0(\alpha_i)$. So,   by a standard gliding humps argument we can prove that $X$ contains no copies of $c_0(\tau)$, see for instance \cite{B}. However cf$(\tau) = \lambda = \text{w}(\gamma \lambda)$ and thus   we are done.

\end{document}